\newcommand{\supp}{{\bf supp}}
\journalname{Preprint}
\begin{document}

\title{Evolutionary Stability Against Multiple Mutations
}
\author{Anirban Ghatak \and K.S.Mallikarjuna Rao  \and A.J. Shaiju 
}
\authorrunning{Ghatak, Rao and Shaiju} 
\institute{Anirban Ghatak \at
              Industrial Engineering and Operations Research,
              Indian Institute of Technology Bombay,
              Mumbai 400076, India. \\
              \email{aghatak@iitb.ac.in}           
           \and
              K. S. Mallikarjuna Rao \at
              Industrial Engineering and Operations Research,
              Indian Institute of Technology Bombay,
              Mumbai 400076, India. \\
              \email{mallik.rao@iitb.ac.in}
            \and
              A. J. Shaiju \at
              Department of Mathematics,
              Indian Institute of Technology Madras,
              Chennai 600036, India. \\
              \email{ajshaiju@iitm.ac.in}
}

\maketitle

\begin{abstract}
It is known (see e.g.\ \citet{Wei-95}) that ESS is not robust against 
multiple mutations. In this article, we
introduce robustness against multiple mutations and
study some equivalent formulations and  consequences. 
\keywords{Evolutionary game \and  ESS \and strict Nash equilibrium  \and multiple mutations}
\end{abstract}

\section{Introduction}
The key concept in evolutionary game theory is ESS introduced by \citet{MP-73}
and early developments and applications to evolutionary biology are reported
in \citet{May-82}. Some of the references to modern developments include
\citet{Cressman}, \citet{HS-98}, \citet{Wei-95}. ESS deals with the situation when
there is only one rare mutation that can influence the population.

In practical scenarios, an incumbent strategy may be subjected to invasions by
several mutations. As an example, we can consider bird nesting. During the season,
birds search for a good location. To get the best location, they may need to compete
with several others. In game theoretic terminology, this corresponds to the invasion
of multiple mutations. Thus it is desirable to study the influence of multiple mutations.
To the best of our knowledge, we are not aware
of any studies which have dealt with the case of multiple mutations. 

In \citet{Wei-95} (and \citet{VincentBrown}),
it is noted that ESS is, in general,  not robust against multiple mutations.
One possible way to approach this issue may be to model this situation as a
multi-player game
and study the corresponding  ESS.  While there have been works dealing
with ESS and multi-player games (see e.g.\ \citet{BroomCanningsVickers}),
we are not aware of
any on the connection of this theory
with multiple mutations.
In this work, we take alternative approach, where we extend the notion of ESS to
take care of  the multiple mutations. We provide some interesting consequences.
Our approach  leads to useful refinement of Nash equilibrium.

Our article is structured as follows. After the introductory section, we introduce
evolutionary stability against multiple mutations in Section 2. We show that
evolutionary stability against multiple mutations is equivalent to evolutionary stability
against two mutations in a special case. In Section 3, we provide an equivalent formulation.
Using the ideas in this equivalent formulation, we show that evolutionary stability 
against multiple mutations is equivalent to evolutionary stability against two mutations.
Section 4 introduces the concept of local dominance  and 
discusses its connections with evolutionary stability. It also draws differences 
with strict Nash equilibrium.  In Section 5, we establish the
fact that evolutionarily stable strategy against multiple mutations is necessarily a 
pure strategy, a property shared by strict Nash equilibrium. One consequence of
this fact is the existence of uniform invasion barrier. We characterize the 
evolutionarily stable strategies against multiple mutations in $2 \times 2$ games.
We conclude our article with some comments and 
directions for further research in Section 6.

\section{Evolutionary Stability}
We consider symmetric games with payoff function $u: \Delta \times
\Delta \to \mathbb{R}$, where $\Delta$ is probability simplex in
$\mathbb{R}^k$ and $u$ is given by the affine function
\[
u(p, q) = \sum_{i,j=1}^k p_i q_j u(e^i, e^j).
\]
Here $e^1=(1,0,0,\cdots,0),\cdots,
 e^k=(0,\cdots,0,1) \in{\mathbb{R}}^k $  denote
  the
 pure strategies of the players.
 We, first recall the definition of
 an evolutionarily stable strategy (ESS for short).
\begin{definition}
A  strategy $p \in \Delta$ is called ESS, if for any mutant strategy
$r \neq p$, there is an invasion barrier  $\epsilon(r) \in (0,1)$
such that
 \begin{equation} \label{ess1}
u(p,\epsilon r + (1-\epsilon)p) >
u(r,\epsilon r + (1-\epsilon)p)
\mbox{ for all } 0<\epsilon \leq \epsilon(r).
\end{equation} \label{ess-defn}
\end{definition}

We gather some  notations that we use in due course:
\begin{eqnarray*}
BR(p) &=& \{ q \in \Delta ~:~
u(q,p)\geq u(r,p)~~\forall r \in
\Delta  \},\\
\Delta^{NE}&=&
\{ p\in \Delta  ~:~ p\in BR(p)  \}.
\end{eqnarray*}

By definition, an ESS is robust against
 any single mutation $r$ appearing in small
  proportions.
 A natural question that  arises  is that whether
an ESS is robust against multiple mutations.
 It is known  (see e.g.
\citet{Wei-95}) that an
 ESS may not be
robust against multiple mutations.
 We now provide an example to
illustrate this fact.
\begin{example}
Consider the $2 \times 2$ symmetric game with
 fitness (or, payoff)
matrix $U= \begin{pmatrix} -1 & 0 \\ 0 & -1  \end{pmatrix}$. The
unique ESS of this game is $p=(\frac{1}{2},\frac{1}{2})$.

Consider $r^1=(\frac{1}{4},\frac{3}{4})$ and
$r^2=(\frac{3}{4},\frac{1}{4})$. Now, for any $0<\epsilon
<\frac{1}{2}$,
\[
u(p,\epsilon r^1 + \epsilon r^2 + (1-2\epsilon)p) =  -\frac{1}{2}.
\]
But
\[
u(r_1,\epsilon r^1 + \epsilon r^2 + (1-2\epsilon)p) = - \left[
\epsilon(\frac{10}{16}+\frac{6}{16} )   +(1-2\epsilon) \frac{1}{2}
\right] =-\frac{1}{2}.
\]
Thus $p$ is not robust against simultaneous mutations $r^1,r^2$,
whenever they appear in equal propositions.
\end{example}

The above example makes it clear that
the Definition \ref{ess-defn} is inadequate to
capture the robustness or evolutionary stability
against multiple mutations. This motivates the
following definition.

\begin{definition} \label{mess-defn}
 Let $m$ be a positive integer.
A  strategy $p \in \Delta$ is said to be evolutionarily
stable (or robust) against  `$m$' mutations if, for every  
$r^1,\cdots,r^m \neq p$, there exists
$\bar{\epsilon}=\bar \epsilon(r^1,\cdots,r^m) \in (0,1)$
such that
\begin{multline*}
u(p,\epsilon_1 r^1+\cdots + \epsilon_m r^m + (1-\epsilon_1-\cdots
 -\epsilon_j)p) \\
{} > \max_{1\leq i \leq m} u(r^i,\epsilon_1 r^1+\cdots + \epsilon_j
r^j +
 (1-\epsilon_1-\cdots  -\epsilon_j)p),
\end{multline*}
for all $\epsilon_1, \ldots,\epsilon_j \in (0,\bar{\epsilon}]$.
\end{definition}
\begin{remark}
Clearly if $m=1$, then the above definition coincides with the definition of ESS.
\end{remark}

\begin{definition}
If $\bar \epsilon$ in Definition \ref{mess-defn}  can be chosen independent of
$(r^1,r^2,\cdots,r^m)$, then we refer to $\bar \epsilon$ as a 
{\it uniform invasion barrier} for $p$ corresponding to $m$ mutations. 
\end{definition}

\begin{remark}
The uniform invasion barrier $\bar{\epsilon}$, in general, depends on $m$. However,
we can choose a bound on the total fraction i.e., 
$\epsilon_1 + \epsilon_2 + \cdots + \epsilon_m$ of the $m$ mutations can be taken 
to be independent of $m$. We come back to this point later. 
\end{remark}

We now provide  a surprising result albeit with a very simple proof.

\begin{theorem} \label{thm-2m}
Let $m>2$ and $p\in \Delta$. If the strategy $p$  is evolutionarily stable against 
two mutations with uniform invasion barrier, then it is  evolutionarily stable against 
$m$ mutations with uniform invasion barrier. 
\end{theorem}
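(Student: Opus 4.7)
My plan is to reduce the $m$-mutation inequality to the two-mutation case by grouping $m-1$ of the $m$ mutants into a single convex combination, exploiting the affinity of $u$ in its second argument. Specifically, if $\bar{\epsilon}_0 \in (0,1)$ denotes the uniform invasion barrier supplied by the hypothesis for two mutations, I would try to show that $\bar{\epsilon} := \bar{\epsilon}_0/(m-1)$ serves as a uniform invasion barrier for $p$ against $m$ mutations.

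Given $r^1,\ldots,r^m \ne p$ and proportions $\epsilon_1,\ldots,\epsilon_m \in (0,\bar{\epsilon}]$, let $X := \sum_{j=1}^m \epsilon_j r^j + \bigl(1 - \sum_{j=1}^m \epsilon_j\bigr)p$ denote the invaded population. For each fixed $i$, I would set $\delta_i := \sum_{j \neq i}\epsilon_j$ and $\tilde{r}_i := \delta_i^{-1}\sum_{j \neq i}\epsilon_j r^j$. By the choice of $\bar{\epsilon}$, both $\epsilon_i$ and $\delta_i$ lie in $(0,\bar{\epsilon}_0]$, and a short computation using affinity shows that $X = \epsilon_i r^i + \delta_i \tilde{r}_i + (1 - \epsilon_i - \delta_i)p$ is precisely the two-mutation mixture formed from the mutants $r^i, \tilde{r}_i$ in proportions $\epsilon_i, \delta_i$. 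When $\tilde{r}_i \ne p$, the two-mutation hypothesis applied to $(r^i, \tilde{r}_i)$ then gives $u(p, X) > u(r^i, X)$ at once; maximising over $i$ yields the required inequality.

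The one subtle point is the degenerate case $\tilde{r}_i = p$, which can occur because a nontrivial convex combination of strategies different from $p$ may coincide with $p$, and then $\tilde{r}_i$ cannot play the role of a mutant in the two-mutation hypothesis. Here, however, $X$ collapses to the single-mutation form $\epsilon_i r^i + (1-\epsilon_i)p$, and I would recover $u(p, X) > u(r^i, X)$ by invoking the two-mutation hypothesis with $r^1 = r^2 = r^i$ and $\epsilon_1 = \epsilon_2 = \epsilon_i/2 \le \bar{\epsilon}_0$; equivalently, the hypothesis already implies the ordinary ESS inequality with a uniform barrier. This case is where the only real bookkeeping lies; structurally the proof amounts to the observation that, under aggregation, an $m$-mutation mixture is indistinguishable from a two-mutation mixture.
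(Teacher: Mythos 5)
Your proof is correct and follows essentially the same route as the paper's: the paper likewise aggregates $m-1$ of the $m$ mutants into a single convex combination $s$ (your $\tilde{r}_i$), applies the two-mutation hypothesis to the pair $(s, r^m)$, and repeats with each $r^j$ in the distinguished role, using the barrier $\bar{\epsilon}/m$ where you use $\bar{\epsilon}_0/(m-1)$ --- an immaterial difference. The one genuine point of divergence is your degenerate case $\tilde{r}_i = p$, and here your version is actually more careful than the published one: the paper silently treats its aggregate $s$ as a legitimate mutant, but a nontrivial convex combination of strategies different from $p$ can coincide with $p$, so Definition \ref{mess-defn} cannot be invoked for the pair $(s, r^m)$ in that situation. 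The paper's own Example 1 shows this is not a vacuous worry: with $p = (\tfrac{1}{2},\tfrac{1}{2})$, $r^1 = (\tfrac{1}{4},\tfrac{3}{4})$, $r^2 = (\tfrac{3}{4},\tfrac{1}{4})$ and equal weights, the aggregate is exactly $p$. Your repair --- observing that $X$ then collapses to $\epsilon_i r^i + (1-\epsilon_i)p$ and invoking the two-mutation hypothesis with the duplicated mutant $r^1 = r^2 = r^i$ in proportions $\epsilon_i/2 \leq \bar{\epsilon}_0$ --- is valid, since the definition requires the mutants to differ from $p$ but not from each other. So your write-up closes a small gap in the paper's argument while otherwise matching it step for step.
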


\begin{proof}
Let $p$ be evolutionarily stable against two mutations with uniform invasion
barrier $\bar{\epsilon}$. 
Let $r^1, r^2, \cdots,  r^m$ be  $m$  strategies different from $p$. 

Let $\epsilon_i \in (0, \frac{\bar{\epsilon}}{m}]$, $i=1,2, \cdots, m$. Consider
\begin{multline*}
w = \epsilon_1 r^1 + \epsilon_2 r^ 2 + \cdots + \epsilon_m r^m  + (1 - \epsilon_1 -
\epsilon_2 -\cdots- \epsilon_m) p  \\
= (\epsilon_1 +\cdots + \epsilon_{m-1}) s +\epsilon_m r^m + 
(1 - \epsilon_1 - \epsilon_2 -  \cdots - \epsilon_m) p,
\end{multline*}
where
\[
s = \frac{\epsilon_1}{\epsilon_1 + \cdots +  \epsilon_{m-1}}   r^1 + 
\cdots +  \frac{\epsilon_{m-1}}{\epsilon_1   + \cdots +  \epsilon_{m-1}}  r^{m-1} .
\]
Obviously $(\epsilon_1 +\cdots + \epsilon_{m-1}) , \epsilon_m \leq \bar{\epsilon}$. 
Now considering $s$ and $r^m$ as mutations, we  have
\[
u (p, w) > \max \{ u( s, w), u( r^m, w) \},
\]
since $p$ is robust against two mutations with 
uniform invasion barrier $\bar{\epsilon}$. Thus
\[
u (p, w) > u( r^m, w).
\]
Instead of $r^m$, we can use any $r^j$, $j =1, 2, \cdots, m-1$ in 
the above analysis which leads to
\[
u (p, w) > u( r^j, w)
\]
for every $j = 1, 2, \cdots, m$. Thus $p$ is evolutionarily stable 
against $m$ mutations and  $\tfrac{\bar{\epsilon}}{m}$ is  uniform
invasion barrier corresponding to $m$ mutations. 
\qed
\end{proof}

\begin{remark}
The theorem assumes the existence of uniform invasion
barrier. This is, indeed, the case as we show later.
\end{remark}

%
%
%
%
%

\section{An Equivalent Formulation}

In this section, we provide an  equivalent formulation for the
evolutionary stability against two mutations.  

\begin{theorem}\label{charmulti}
For $p\in \Delta$, the following are equivalent:
\begin{enumerate}[{\bf (a)}]
\item  $p$ is robust against  two  mutations;

\item $p\in \Delta^{NE}$, and,
 for every $q \in BR(p) \setminus \{ p\}$ and
  $r\in \Delta$,
\[
  u(p,q)>u(q,q)  \quad \mbox{ and } \quad
u(p,r) \geq u(q,r) .
\]
\end{enumerate}
\end{theorem}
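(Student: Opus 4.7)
The plan is to prove the two implications separately, in each direction expanding the mixed payoff $u(p,w)-u(r^i,w)$ by affinity of $u$ and examining the coefficients of $\epsilon_1$, $\epsilon_2$, and $1-\epsilon_1-\epsilon_2$.

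For the direction (a) $\Rightarrow$ (b), I would first note that robustness against two mutations subsumes the usual ESS condition (take a single mutation and double it, or take both $r^i$ equal), so $p \in \Delta^{NE}$ and $u(p,q) > u(q,q)$ for every $q \in BR(p)\setminus\{p\}$ drop out immediately. The heart of this direction is the new inequality $u(p,r) \geq u(q,r)$ for $q \in BR(p)\setminus\{p\}$ and arbitrary $r \in \Delta$. The cases $r = p$ and $r = q$ are immediate (using $u(q,p)=u(p,p)$ since $q \in BR(p)$, and the ESS condition, respectively), so assume $r \notin \{p,q\}$ and use $q$ and $r$ as the two mutations. With $w = \epsilon_1 q + \epsilon_2 r + (1-\epsilon_1-\epsilon_2)p$, affinity yields
\[
u(p,w) - u(q,w) = \epsilon_1[u(p,q)-u(q,q)] + \epsilon_2[u(p,r)-u(q,r)] + (1-\epsilon_1-\epsilon_2)[u(p,p)-u(q,p)],
\]
and the last bracket vanishes because $q \in BR(p)$. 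If $u(p,r) - u(q,r) < 0$, one can fix $\epsilon_2 = \bar\epsilon$ and send $\epsilon_1 \to 0^{+}$ inside $(0,\bar\epsilon]$; the right-hand side then becomes negative, contradicting robustness. Hence $u(p,r) \geq u(q,r)$.

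For the direction (b) $\Rightarrow$ (a), fix $r^1, r^2 \neq p$ and, for each $i \in \{1,2\}$, expand
\[
u(p,w) - u(r^i,w) = \epsilon_1[u(p,r^1)-u(r^i,r^1)] + \epsilon_2[u(p,r^2)-u(r^i,r^2)] + (1-\epsilon_1-\epsilon_2)[u(p,p)-u(r^i,p)].
\]
I would split on whether $r^i \in BR(p)$. If $r^i \notin BR(p)$, the last bracket is strictly positive (by $p \in \Delta^{NE}$), and since the first two brackets are bounded, the whole quantity is positive once $\bar\epsilon$ is small enough. If $r^i \in BR(p)\setminus\{p\}$, the last bracket vanishes; the hypothesis (b), applied with $q = r^i$, makes the coefficient of $\epsilon_i$ strictly positive (namely $u(p,r^i)-u(r^i,r^i) > 0$) and the coefficient of the other $\epsilon$ non-negative (namely $u(p,r^j)-u(r^i,r^j) \geq 0$), so the sum is strictly positive for every $\epsilon_1,\epsilon_2 > 0$. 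Taking the minimum of the two case-dependent thresholds gives a single $\bar\epsilon$ valid for both $i=1$ and $i=2$.

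The main obstacle is the rescaling step in (a) $\Rightarrow$ (b): one must exploit the fact that $\epsilon_1$ and $\epsilon_2$ range independently over $(0,\bar\epsilon]$ to send one to zero while keeping the other away from zero, thereby isolating the sign of $u(p,r)-u(q,r)$. Everything else is routine bilinear bookkeeping combined with the Nash/best-reply dichotomy.
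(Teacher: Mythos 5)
Your proposal is correct and follows essentially the same route as the paper's proof: the same affine expansion of $u(p,w)-u(q,w)$ into the three $\epsilon$-brackets with the $BR(p)$ term vanishing, the same exploitation of the independence of $\epsilon_1,\epsilon_2$ to isolate the sign of $u(p,r)-u(q,r)$ in (a) $\Rightarrow$ (b), and the same dichotomy $r^i \in BR(p)$ versus $r^i \notin BR(p)$ in (b) $\Rightarrow$ (a). Your explicit treatment of the edge cases $r \in \{p,q\}$, the limiting argument, and the minimum over the two thresholds merely spells out steps the paper leaves implicit.
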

\begin{proof}
We start with (a) $\Rightarrow$ (b). Assume that $p$ is robust
against two mutations.  In particular $p$ is an ESS. Let $q \in
BR(p) \setminus \{ p\}$ and $r\in \Delta$. For small enough
$\epsilon_1,\epsilon_2>0 $, we must have
\[
u(p,(1-\epsilon_1-\epsilon_2)p + \epsilon_1q +   \epsilon_2r) >
u(q,(1-\epsilon_1-\epsilon_2)p +  \epsilon_1q+\epsilon_2r).
\]
Rearranging the terms, we get
\[
\epsilon_1 \{  u(p, q) - u(q, q) \} +  \epsilon_2 \{ u(p, r) - u(q, r) \}
+ (1 - \epsilon_1 - \epsilon_2) \{ u(p, p) - u(q, p) \} > 0.
\]
Since $q \in BR(p)$, the third term is zero and
 hence,
 for small enough $\epsilon_1,\epsilon_2>0$,
  we have
\[
\epsilon_1[u(p,q)-u(q,q)]+
\epsilon_2[u(p,r)-u(q,r)] >0.
\]
Since $p$ is an ESS and $q   \in BR(p)\setminus
 \{ p \}$, we have
$u(p,q)>u(q,q) $. From this and the
 above inequality, it follows that
  $u(p,r) \geq u(q,r)$.

We now show that (b) $\Rightarrow$ (a). Assume (b). 
Let the mutations $r^1, r^2$ appear in  proportions $\epsilon_1,\epsilon_2$
respectively. For $i=1,2$, let
\[
h_i(\epsilon_1,\epsilon_2) :=
u(p,\epsilon_1 r^1+\epsilon_2 r^2 + (1-\epsilon_1-\epsilon_2)p) -
u(r_i,\epsilon_1 r^1+\epsilon_2 r^2+ (1-\epsilon_1- \epsilon_2)p)
\]
We need to show that for $\epsilon_1,\epsilon_2$ small enough,
$h_i(\epsilon_1, \epsilon_2) > 0$ for each $i=1,2$. Note that
\begin{multline}\label{h-defn}
h_i(\epsilon_1, \epsilon_2) =  \epsilon_1[u(p,r^1)-u(r_i,r^1)]  \\
{ } + \epsilon_2 [u(p,r^2)-u(r^i,r^2)] + (1-\epsilon_1-\epsilon_2)[u(p,p) - u(r^i,p)].
\end{multline}
Fix $i$. If $r^i \in BR(p)$, then the third term on the R.H.S. of \eqref{h-defn} is zero.
By hypothesis, $u(r^i, r^i) < u(p, r^i)$ and $u(r^i, r^j) \leq u(p, r^j)$, for $j \neq i$.
Therefore, for $\epsilon_1,\epsilon_2>0$,
$h_i(\epsilon_1,\epsilon_2)>0$ whenever $r^i \in BR(p)$.
 
Now let $r^i \not \in BR(p)$. Then $u( p, p) - u(r^i, p) > 0$.
Hence for sufficiently small $\epsilon_1 $ and $ \epsilon_2$,
we must have  $h(\epsilon_1,\epsilon_2)>0$. Thus $p$ is 
robust against two mutations.
\qed
\end{proof}

\begin{remark}
The above characterization suggests the following interpretation of
evolutionary stability against multiple mutations: An
ESS is robust against multiple mutations if
and only if it dominates all strategies that are best responses to it.
\end{remark}

A careful observation of  the  proof of Theorem \ref{charmulti} shows
that evolutionary stability against 2 mutations is equivalent to evolutionary 
stability against any  $m$ mutations, $m \geq 2$. We now prove this
equivalence. Note that in the previous section, 
we showed this result when there is uniform invasion barrier. 

\begin{theorem}\label{charmultithm2}
A strategy is evolutionarily stable against two mutations if and only if it is evolutionarily
stable against  $m$ mutations, where $m > 2$.
\end{theorem}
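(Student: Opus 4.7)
The plan is to reduce Theorem \ref{charmultithm2} to the characterization already established in Theorem \ref{charmulti}. The condition (b) in Theorem \ref{charmulti} makes no explicit reference to the number of mutations, so if I can show that (b) is equivalent to robustness against $m$ mutations for every $m \geq 2$, the desired equivalence follows by transitivity through the 2-mutation case.

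For the easy direction (robust against $m$ mutations $\Rightarrow$ robust against 2 mutations), I would use a simple encoding. Given $r^1, r^2 \neq p$ and target weights $\epsilon_1, \epsilon_2$, I apply the $m$-mutation robustness to the inflated tuple $(r^1, r^2, r^1, r^1, \ldots, r^1)$ with weights $\bigl(\tfrac{\epsilon_1}{m-1}, \epsilon_2, \tfrac{\epsilon_1}{m-1}, \ldots, \tfrac{\epsilon_1}{m-1}\bigr)$. This reproduces exactly the population mix $\epsilon_1 r^1 + \epsilon_2 r^2 + (1-\epsilon_1-\epsilon_2)p$, and the maximum on the right-hand side of the $m$-mutation inequality reduces to $\max\{u(r^1, w), u(r^2, w)\}$; the strict inequality inherited from $m$-stability then yields the 2-stability inequality, with a suitable invasion barrier scaled from the $m$-mutation one.

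For the substantive direction (robust against 2 mutations $\Rightarrow$ robust against $m$ mutations), I would mimic the (b) $\Rightarrow$ (a) step in the proof of Theorem \ref{charmulti}. Fix mutations $r^1, \ldots, r^m \neq p$ and weights $\epsilon_1, \ldots, \epsilon_m > 0$, set $w = \sum_j \epsilon_j r^j + \bigl(1 - \sum_j \epsilon_j\bigr)p$, and use affinity of $u$ to expand
\begin{multline*}
h_i(\epsilon_1, \ldots, \epsilon_m) := u(p,w) - u(r^i, w) \\
= \sum_{j=1}^m \epsilon_j [u(p, r^j) - u(r^i, r^j)] + \Bigl(1 - \sum_{j=1}^m \epsilon_j \Bigr)[u(p,p) - u(r^i, p)].
\end{multline*}
Fix an index $i$. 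If $r^i \notin BR(p)$, then $u(p,p) - u(r^i, p) > 0$, and $h_i > 0$ for all sufficiently small $\epsilon$ by continuity. If $r^i \in BR(p) \setminus \{p\}$, then the last bracket vanishes, and condition (b) of Theorem \ref{charmulti} applied with $q = r^i$ gives $u(p, r^j) \geq u(r^i, r^j)$ for every $j$, with strict inequality at $j=i$; hence $h_i(\epsilon) \geq \epsilon_i [u(p, r^i) - u(r^i, r^i)] > 0$ whenever $\epsilon_i > 0$.

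I do not anticipate any serious obstacle, since the essential work is already done in Theorem \ref{charmulti}. The only bookkeeping concern is that the invasion barrier supplied by the first case depends on $r^i$, but there are only $m$ indices to consider, so taking the minimum over the finitely many $i$ with $r^i \notin BR(p)$ (together with any positive value for indices handled by the second case) yields a single invasion barrier $\bar\epsilon(r^1, \ldots, r^m)$ satisfying Definition \ref{mess-defn}.
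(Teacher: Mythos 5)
Your proposal is correct and follows essentially the same route as the paper: the substantive direction expands $h_i(\epsilon_1,\ldots,\epsilon_m)$ by affinity and splits on whether $r^i \in BR(p)$, invoking condition (b) of Theorem \ref{charmulti} exactly as the paper does. Your only addition is an explicit argument for the converse via duplicating $r^1$ with split weights (a valid way to handle the requirement that all $\epsilon_i$ be strictly positive), which the paper simply dismisses as trivial.
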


\begin{proof}
We will only show that evolutionary stability against two mutations implies the evolutionary
stability against $m$ mutations, the other part being trivial.

Let $p$ be evolutionarily stable against two mutations. 
Let  $r^1, r^2, \cdots, r^m$ be $m$ mutations that appear
with proportions $\epsilon_1, \epsilon_2, \cdots, \epsilon_m$, respectively.
For $i = 1, 2, \cdots, m$, let
\begin{multline*}
h_i(\epsilon_1,\epsilon_2, \cdots, \epsilon_m) :=
u(p,\epsilon_1 r^1+\epsilon_2 r^2 +  \cdots + \epsilon_m r^m + 
(1-\epsilon_1-\epsilon_2 - \cdots - \epsilon_m ) p )  \\
{} - u(r_i,\epsilon_1 r^1+\epsilon_2 r^2+ \cdots + \epsilon_m r^m +  
(1-\epsilon_1- \epsilon_2 - \cdots - \epsilon_m )p)
\end{multline*}
We need to show that for $\epsilon_1,\epsilon_2, \cdots, \epsilon_m$ small enough,
$h_i(\epsilon_1, \epsilon_2, \cdots,  \epsilon_m) > 0$ for each $i=1,2, \cdots, m$. 
Note that
\begin{multline}\label{h-defn2}
h_i(\epsilon_1, \epsilon_2, \cdots, \epsilon_m) =  \epsilon_1[u(p,r^1)-u(r^i,r^1)]   
+ \epsilon_2 [u(p,r^2)-u(r^i,r^2)]  \\ 
{ } + \cdots   + \epsilon_m [u(p, r^m) - u(r^i, r^m)] \\ 
{ }  +   (1-\epsilon_1 - \epsilon_2 - \cdots - \epsilon_m ) [u(p,p) - u(r^i,p)].
\end{multline}
Fix $i$. If $r^i \in BR(p)$,  then  $u(r^i, p) - u(p, p) = 0$. From Theorem \ref{charmulti},
we have
\[
u(r^i, r^i) < u(p , r^i) \text{ and } u(r^i, r^j) \leq u(p, r^j)
\]
for all $j \neq i$. As a result, we have $h_i(\epsilon_1,\epsilon_2, \cdots, \epsilon_m) > 0$
for $\epsilon_1,\epsilon_2, \cdots, \epsilon_m > 0$, whenever $r^i \in BR(p)$.
 
Now let $r^i \not \in BR(p)$. Then $u( p, p) - u(r^i, p) > 0$.
Thus for sufficiently small $\epsilon_1,  \epsilon_2, \cdots, \epsilon_m > 0$,
we must have  $h(\epsilon_1,\epsilon_2, \cdots, \epsilon_m) > 0$.  And 
hence $p$ is evolutionarily stable against $m$ mutations.
\qed
\end{proof}

\begin{remark}
As a result of the Theorem \ref{charmultithm2},  if a strategy is evolutionarily 
stable 
against $m\geq 2$ mutations, we refer to it simply as evolutionarily stable
against multiple mutations, by suppressing the number $m$.
\end{remark}

\section{Local Dominance}

In  evolutionary game theory, ESS is characterized by means of two
notions: uniform invasion barrier and local superiority.  Uniform invasion 
barrier is already introduced. Local superiority of a mixed strategy
$p$ implies that $u(p, q) > u(q, q)$ for every $q\neq p$ in a neighborhood of
$p$. The interpretation of this notion is as follows: $p$ is ESS if and only if
in a neighborhood of $p$, there can not be any other symmetric Nash equilibrium
other than $p$. We now introduce the corresponding generalization of local superiority to the case of 
multiple mutations.

\begin{definition}[Local Dominance]
A  strategy $p\in \Delta$ is said to be locally dominant if
there is a neighborhood $U$ of $p$ such  that
 $u(p,r) \geq u(s,r)$ and $u(p,r)>u(r,r)$
for every $s, r \in U
 \setminus \{ p \}$.
\end{definition}

\begin{remark}
Note that if $p$ is  locally dominant, then we can easily show
that
\[
u(p,r) \geq u(s,r) \text{ and } u(p,r)>u(r,r)
\]
for every $s \in \Delta$ and $ r \in U \setminus \{ p \}$, 
where $U$ is the neighborhood as in the
definition of local dominance.
\end{remark}

We now show that evolutionary stability against multiple mutations and 
local dominance are equivalent. 

\begin{theorem}
A strategy $p$ is evolutionarily stable  against multiple mutations if and only if it is 
locally dominant.
\end{theorem}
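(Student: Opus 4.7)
The plan is to prove both implications by reducing to the characterization of evolutionary stability against multiple mutations obtained from Theorems \ref{charmultithm2} and \ref{charmulti}: $p \in \Delta^{NE}$ and, for every $q \in BR(p)\setminus\{p\}$ and $r \in \Delta$, one has $u(p,q) > u(q,q)$ and $u(p,r) \geq u(q,r)$.

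For sufficiency, the clause $u(p,r) > u(r,r)$ for $r \in U \setminus \{p\}$ is local superiority, which is the standard characterization that makes $p$ an ESS; in particular $p \in \Delta^{NE}$ and $u(p,q) > u(q,q)$ for every $q \in BR(p) \setminus \{p\}$. To obtain the remaining inequality $u(p,r) \geq u(q,r)$ for $q \in BR(p)\setminus\{p\}$ and an arbitrary $r \in \Delta$, I would set $r_\epsilon = (1-\epsilon)p + \epsilon r$, which lies in $U$ for small $\epsilon > 0$, and apply the extended local-dominance inequality (as stated in the remark after the definition) with $s = q$: this gives $u(p, r_\epsilon) \geq u(q, r_\epsilon)$, and expanding bilinearly and cancelling $u(p,p) = u(q,p)$ (which holds because $q \in BR(p)$) leaves $\epsilon[u(p, r) - u(q, r)] \geq 0$.

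For necessity, the characterization yields $p \in \Delta^{NE}$ and $u(p,q) > u(q,q)$ for $q \in BR(p) \setminus \{p\}$, which is the classical ESS condition; hence local superiority $u(p,r) > u(r,r)$ holds on some neighbourhood $V$ of $p$. The main step is to produce a neighbourhood $U_1$ on which $u(p,r) \geq u(s,r)$ for every $s \in \Delta$. Let $E$ denote the set of pure best responses to $p$, so $BR(p) = \{q \in \Delta : \supp(q) \subseteq E\}$. For $s \in \Delta$, write
\[
s = (1-\alpha)\, s' + \alpha\, \tilde s, \qquad \alpha = \sum_{i \notin E} s_i,
\]
where $s'$ and $\tilde s$ are the normalized restrictions of $s$ to $E$ and $E^c$ respectively (the corresponding term omitted if its weight vanishes). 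Since $\supp(s') \subseteq E$, $s' \in BR(p)$ and Theorem \ref{charmulti}(b) yields $u(p,r) \geq u(s',r)$ for every $r \in \Delta$. Setting $\beta = u(p,p) - \max_{i \notin E} u(e^i, p) > 0$ (strictly positive because no $e^i$ with $i \notin E$ is a best reply to $p$), the identity
\[
u(p,r) - u(\tilde s, r) = [u(p,p) - u(\tilde s, p)] + u(p - \tilde s,\, r - p)
\]
has first bracket $\geq \beta$, while the second is bounded by a constant multiple of $\|r - p\|_1$ uniformly in $\tilde s$ (since $\|p - \tilde s\|_1 \leq 2$ and the coefficients $u(e^i, e^j)$ are bounded). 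On a sufficiently small $U_1$ the second bracket is therefore dominated by $\beta$, so $u(p,r) - u(\tilde s,r) > 0$ uniformly in $\tilde s$; convex-combining with the non-negative contribution from $s'$ gives $u(p,r) \geq u(s,r)$. Taking $U = V \cap U_1$ then realizes local dominance.

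The main obstacle is this uniform control in the necessity direction: $\tilde s$ can range over an entire face of $\Delta$, so mere pointwise continuity is not enough. The decomposition via the best-response face $E$ is what replaces continuity at a point by a quantitative gap $\beta$ intrinsic to $p$, independent of $\tilde s$.
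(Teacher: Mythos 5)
Your proof is correct, and its sufficiency half is essentially the paper's own argument: both pass to $r^\epsilon = \epsilon r + (1-\epsilon)p \in U$, apply the dominance inequality extended to all $s \in \Delta$ (the remark after the definition), and cancel $u(p,p) = u(q,p)$ for $q \in BR(p)$ to extract $u(p,r) \geq u(q,r)$, which together with the standard ESS conditions verifies part (b) of Theorem \ref{charmulti}. Your necessity half, however, takes a genuinely different route to the required uniformity in $s$. The paper fixes a mutant $s$, shows $f(r) := u(p,r) - u(s,r) \geq 0$ along each segment $\epsilon e^i + (1-\epsilon)p$ (using Theorem \ref{charmulti} when $s \in BR(p)$, and the gap $u(p,p) > u(s,p)$ otherwise), takes the convex hull of these segments as a neighborhood $V(s)$ on which the affine function $f$ stays nonnegative, and then removes the dependence on $s$ by an observation you did not use: $s \mapsto u(s,r)$ is linear, so $u(p,r) \geq u(s,r)$ for \emph{all} $s \in \Delta$ follows from the finitely many pure cases, i.e.\ on $U = \cap_{i=1}^k V(e^i)$. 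You instead decompose $s = (1-\alpha)s' + \alpha\tilde{s}$ along the best-response face $E$, treat $s' \in BR(p)$ globally in $r$ via Theorem \ref{charmulti}(b), and control the opposite face uniformly through the gap $\beta = u(p,p) - \max_{i \notin E} u(e^i,p) > 0$ combined with the bilinear estimate $|u(p-\tilde{s}, r-p)| \leq 2 \max_{i,j} |u(e^i,e^j)| \, \|r-p\|_1$; the identity you invoke is verified directly by bilinearity, and the convex combination closes the argument. What your version buys is an explicit, quantitative radius for the neighborhood and a self-contained resolution of the uniformity over $\tilde{s}$; what the paper's version buys is brevity, since the linearity-in-$s$ reduction to the $k$ vertices makes the ``entire face'' obstacle you flag at the end evaporate without any norm estimates (indeed, your own gap argument applied only to the pure strategies $e^i$ with $i \notin E$, followed by convex combination, would shorten your proof in the same way). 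Both versions, in both directions, rest on the same key lemma: the characterization of Theorem \ref{charmulti}, transferred to $m$ mutations by Theorem \ref{charmultithm2}.
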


\begin{proof}
Assume that $p$ is locally dominant. By definition, $p$ is an ESS.
Let $q  \in BR(p)$ and $q, r\neq p$. To show that $p$ is robust against
multiple mutations,  it suffices to show that $u(p,r)\geq u(q,r)$.

Note that $r^\epsilon=\epsilon r+(1-\epsilon)p$
is close to $p$ whenever $\epsilon>0$ is small enough. Since $p$ is weak locally
dominant, for $\epsilon>0$ small enough,
we must have
\begin{eqnarray*}
0 &\leq& u(p,r^\epsilon)-u(q,r^\epsilon)
= \epsilon [u(p,r)-(q,r)].
\end{eqnarray*}
This implies that $u(p,r)\geq u(q,r)$.

Now assume that $p$ is robust against multiple
mutations. Let $s\neq p$. We first show that
there exists a neighborhood $V=V(s)$ of $p$
such that
\begin{equation} \label{wkld-ineq}
f(r):= u(p,r)- u(s,r) \geq 0
\end{equation}
for all $r\in V
\setminus \{ p \}$.

Now
\[
f(e^i_\epsilon)= \epsilon[u(p,e^i)-u(s,e^i)]+
(1-\epsilon)[u(p,p)-u(s,p)],
\]
where $e^i_\epsilon = \epsilon e^i+(1-\epsilon)p$.\\
If $s \in BR(p)$, then, by hypothesis,
$f(e^i_\epsilon)\geq 0$ for every $0\leq \epsilon \leq 1$.
If $s\notin BR(p)$,
then clearly there exists $\bar \epsilon_i (s)
\in (0,1)$
such that
$f(e^i_\epsilon)> 0$
for $0\leq \epsilon <\bar \epsilon_i(s)$.

Thus  $f(r) \geq 0$ when $r\in L$;
\[
L=\{ w \in \Delta ~:~ w=\epsilon e^i+ (1-\epsilon)p ~ \mbox{ for some }
1 \leq i \leq k,~
0 \leq \epsilon< \min_{1\leq i \leq k}
\bar \epsilon_i(s) \}.\\
\]
This clearly implies that $f(r) \geq 0$  for every $r$
in the convex hull $V=V(s)$ (which is also a neighborhood of $p$)
of $L$.  Therefore $u(p,r) \geq u(s,r)$ for every $s$ and
$r \in U :=\cap_{i=1}^kV(e^{i})$. This implies that $p$ is  locally dominant.
\qed
\end{proof}

In the following proposition we show that the inequality in the local dominance
is strict for all $s$ whose support has a non-empty intersection with the support
of $p$.

\begin{theorem}
Let $p$ be robust against multiple mutations. Then there is a neighborhood
$U$ such that $u(p, r) > u(s, r)$ for all $r \in U$ and $s \in U$ such that
$\supp(s) \cap \supp(p) = \emptyset$.
\end{theorem}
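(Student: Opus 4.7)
My plan is to combine Theorem~\ref{charmulti} with a case split on whether $s$ is a best response to $p$, and then consolidate the resulting pointwise neighborhoods using a compactness argument. First I would observe that the set $D := \{s \in \Delta : \supp(s) \cap \supp(p) = \emptyset\}$ is a closed face of $\Delta$, hence compact; moreover, if $p$ has full support then $D$ is empty and the statement is vacuous, so I may assume $\supp(p) \subsetneq \{1,\ldots,k\}$ and focus on producing a neighborhood $U$ of $p$ on which the strict inequality holds uniformly in $s \in D$.

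For each fixed $s \in D$ I would analyze the affine function $r \mapsto u(p,r) - u(s,r)$. In the easy case $s \notin BR(p)$, the Nash property of $p$ forces $u(p,p) - u(s,p) > 0$, and continuity of the bilinear form $u$ immediately yields a neighborhood $V_s$ of $p$ on which $u(p,r) > u(s,r)$. In the harder case $s \in BR(p)$, the disjoint-support hypothesis automatically gives $s \neq p$, and Theorem~\ref{charmulti} supplies both the strict inequality $u(p,s) > u(s,s)$ and the global weak inequality $u(p,r) \geq u(s,r)$ for every $r \in \Delta$. Writing $r = (1-\epsilon)p + \epsilon t$ in a small neighborhood of $p$, one has $u(p,r) - u(s,r) = \epsilon\,[u(p,t) - u(s,t)]$, where the bracketed coefficient is nonnegative in $t$ and strictly positive at $t=s$. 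The disjoint-support condition $\supp(s)\cap\supp(p)=\emptyset$ is exactly what separates $s$ from the face carrying $p$ and thus from the directions in which this coefficient could vanish.

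Finally, I would invoke compactness of $D$ together with joint continuity of $(s,r) \mapsto u(p,r) - u(s,r)$ to glue the family $\{V_s\}_{s \in D}$ into a single neighborhood $U$ via a standard finite-subcover/tube-lemma argument, obtaining strict inequality simultaneously for all $s \in D$.

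The step I expect to be the main obstacle is the subcase $s \in BR(p) \cap D$ in the pointwise analysis: the affine functional $r \mapsto u(p-s,r)$ is only a priori nonnegative by Theorem~\ref{charmulti} and already vanishes at $r = p$, so it could in principle vanish along an entire face of $\Delta$ through $p$. Ruling out such degenerate vanishing directions --- which is where the hypothesis $\supp(s) \cap \supp(p) = \emptyset$ must be used essentially, by combining it with the strict inequality $u(p,s) > u(s,s)$ supplied by Theorem~\ref{charmulti} --- is, in my view, the heart of the argument.
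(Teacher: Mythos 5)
Your proposal stalls at exactly the step you flagged, and that step cannot be repaired, because under your (natural, non-vacuous) reading of the statement the claim is false. In the subcase $s \in BR(p) \cap D$, Theorem~\ref{charmulti} supplies only the weak inequality $u(p,t) \geq u(s,t)$ together with strict positivity at the single point $t=s$; writing $r=(1-\epsilon)p+\epsilon t$ as you do, strictness at $t=s$ controls $r$ only in a cone of directions around $s$, while $r$ ranges over a full neighborhood of $p$, so any other direction $t$ where the coefficient $u(p,t)-u(s,t)$ vanishes kills the strict inequality along an entire ray into $p$ --- and the disjoint-support hypothesis on $s$ places no restriction whatsoever on such $t$. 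Concretely, take
\[
A=\begin{pmatrix} -1 & 0 & 0 \\ 0 & 0 & -1 \\ 0 & 0 & 0 \end{pmatrix},
\qquad u(x,y)=-x_1y_1-x_2y_3 .
\]
Here $p=e^3$ satisfies condition (b) of Theorem~\ref{charmulti}: $BR(p)=\{q: q_2=0\}$, and for $q=(\lambda,0,1-\lambda)$ with $\lambda\in(0,1]$ one checks $u(p,q)-u(q,q)=\lambda^2>0$ and $u(p,r)-u(q,r)=\lambda r_1\geq 0$ for all $r$. So $p$ is robust against multiple mutations (Theorems~\ref{charmulti} and~\ref{charmultithm2}). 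Yet $s=e^1$ lies in $BR(p)\cap D$, and $u(p,r)-u(s,r)=r_1$ vanishes identically on the edge $r^\epsilon=(0,\epsilon,1-\epsilon)$, which meets every neighborhood of $p$. Thus no pointwise neighborhood $V_s$ exists in this subcase, and no finite-subcover or tube-lemma gluing can rescue the argument; your compactness step is sound but has nothing to glue.

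For comparison, the paper's own proof takes a different route --- it forms the compact set $C$ of strategies vanishing at some coordinate of $\supp(p)$, posits a uniform invasion barrier over $C$, and passes to convex mixtures --- but it founders on the same point: letting one mutant proportion tend to $0$ in the two-mutation inequality yields only the weak inequality $u(p,r)\geq u(s,r)$, which is precisely what your case analysis delivers, and its final strict assertion for all $r,s\neq p$ in $U$ is contradicted by the example above (its claim that $p\in C$ is also false as written). Note too that read literally the statement requires $s\in U$ with $\supp(s)\cap\supp(p)=\emptyset$; any such $s$ has $\|s-p\|_1\geq\sum_{i\in\supp(p)}p_i=1$, so for small $U$ the theorem is vacuously true, which is surely not intended (the preceding prose even says \emph{non-empty} intersection, a version the same example also refutes via $s=(\lambda,0,1-\lambda)$ and $r=r^\epsilon$). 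So your instinct was right that this subcase is the heart of the matter, but it is a counterexample-bearing hole rather than a provable lemma: the statement would have to be reformulated --- e.g., restricted to $s\notin BR(p)$, where your easy case already gives strictness from $u(p,p)>u(s,p)$ --- before any proof along your lines (or the paper's) can succeed.
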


\begin{proof}
Let $p$ be evolutionarily stable against multiple mutations and
let
\[
C = \{ p \in \Delta : p_i = 0 \text{ for some } i \in \supp(p) \}.
\]
Clearly $C$ is compact and $p \in C$.  We can choose $\bar{\epsilon} > 0$
such that
\[
u(p, \epsilon_1 r + \epsilon_2 s + (1-\epsilon_1 + \epsilon_2) p)
> u(r,  \epsilon_1 r + \epsilon_2 s + (1-\epsilon_1 - \epsilon_2) p)
\]
for all $r, s \in C$ and $0 < \epsilon_1, \epsilon_2 \leq \bar{\epsilon}$.
Hence
\[
u(p, \epsilon_1 r + \epsilon_2 s + (1-\epsilon_1 + \epsilon_2) p)
>  u(\alpha_1 r + \alpha_2 s + (1-\alpha_1 + \alpha_2) p,
\epsilon_1 r + \epsilon_2 s + (1-\epsilon_1 + \epsilon_2) p)
\]
for every $0 < \alpha_1, \alpha_2, \epsilon_1, \epsilon_2 < \bar{\epsilon}$.
If we choose $U = B(p; \bar{\epsilon})$, then from the above we have
\[
u(p, r) > u(s, r)
\]
for all $r, s \neq p \in U$.
\qed
\end{proof}

We now make a definition.

\begin{definition}[Strict Local Dominance]
A  strategy $p\in \Delta$ is said to be strictly locally dominant if
there is a neighborhood $U$ of $p$ such  that
$u(p,r) >  u(s,r)$  for every $s, r \in U
 \setminus \{ p \}$.
\end{definition}

A strict Nash equilibrium is always strictly locally dominant. We
may think that the other way is also correct. However it is not
the case as the following example shows.

\begin{example}
Consider the $2 \times 2$ symmetric game with fitness
matrix
$$U=\left(  \begin{array}{rr} -1 & 0 \\ 0 & 0   \end{array} \right). $$
Clearly
$BR(e^2)=\Delta$, and hence it is not a strict symmetric Nash equilibrium.
But $e^2$ is an ESS, since for any $q\neq e^2$,
\[
u(q,q)=-q_1^2<0=u(e^2,q).
\]
Furthermore, for $q,r \neq e^2$,
\[
u(q,r)=-q_1r_1 <0 =u(e^2,r).
\]
Therefore, by Theorem \ref{charmulti}, $e^2$ is robust against
$m$ mutations, for any $m\geq 1$.
\end{example}

\section{Pure Strategies and Uniform Invasion Barrier}
An ESS can be mixed. On contrary, evolutionarily stable strategy against
multiple mutations is always pure. We now prove this fact.
Note that a strict Nash equilibrium is also necessarily
pure.

\begin{theorem}
An evolutionarily stable strategy against multiple mutations is necessarily a pure
strategy.
\end{theorem}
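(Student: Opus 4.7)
The plan is to argue by contradiction: suppose $p$ is evolutionarily stable against multiple mutations but mixed, so $|\supp(p)| \geq 2$. The central tool will be the equivalent characterization in Theorem \ref{charmulti}: $p$ is a symmetric Nash equilibrium and for every $q \in BR(p) \setminus \{p\}$ one has $u(p,q) > u(q,q)$ together with $u(p,r) \geq u(q,r)$ for every $r \in \Delta$.

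Because $p$ is a Nash equilibrium, every pure strategy $e^i$ with $i \in \supp(p)$ satisfies $u(e^i,p) = u(p,p)$, so $e^i \in BR(p)$. Since $p$ is mixed, $e^i \neq p$, placing $e^i$ in $BR(p) \setminus \{p\}$. I will then instantiate the two inequalities from Theorem \ref{charmulti} twice. First, with $q = e^j$ and $r = e^j$ for $j \in \supp(p)$, I obtain the strict inequality
\[
u(p, e^j) > u(e^j, e^j).
\]
Second, with $q = e^i$ for any other $i \in \supp(p)$, $i \neq j$, and $r = e^j$, I get
\[
u(p, e^j) \geq u(e^i, e^j).
\]

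The finish is a single bilinear expansion. Using $u(p, e^j) = \sum_{i \in \supp(p)} p_i\, u(e^i, e^j)$, split off the diagonal term:
\[
u(p, e^j) = p_j\, u(e^j, e^j) + \sum_{i \in \supp(p),\, i \neq j} p_i\, u(e^i, e^j).
\]
Because $p_j > 0$, the strict inequality above promotes $p_j u(e^j,e^j) < p_j u(p,e^j)$, while the weak inequalities give $p_i u(e^i,e^j) \leq p_i u(p,e^j)$ for each $i \neq j$ in $\supp(p)$. Summing and using $\sum_{i \in \supp(p)} p_i = 1$ yields $u(p,e^j) < u(p,e^j)$, the desired contradiction. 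Hence $p$ must be pure.

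The only mild subtlety, and the step I would be most careful to justify, is ensuring that some other index $i \neq j$ actually lies in $\supp(p)$ so that the summation picks up at least one term with $p_i > 0$; this is guaranteed precisely by the standing assumption $|\supp(p)| \geq 2$. Everything else is a direct application of Theorem \ref{charmulti} and the affine form of $u$.
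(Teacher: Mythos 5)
Your proof is correct, but it takes a genuinely different route from the paper's. The paper argues directly from Definition \ref{mess-defn}: it takes all $k$ pure strategies $e^1,\dots,e^k$ as simultaneous mutations (each differs from $p$ precisely because $p$ is mixed), forms the post-invasion population $r=\sum_i \alpha_i e^i + (1-\sum_i\alpha_i)p$ with each $\alpha_i$ below the invasion barrier, and gets the contradiction from
\[
u(p,r)=\sum_{i=1}^k p_i\, u(e^i,r) \;\leq\; \max_{1\le i\le k} u(e^i,r),
\]
which violates the defining strict inequality. You instead route through the static characterization of Theorem \ref{charmulti} (legitimately, since Theorem \ref{charmultithm2} makes stability against $m\geq 2$ mutations equivalent to stability against two): pure strategies in $\supp(p)$ are alternative best replies, so they inherit $u(p,e^j)>u(e^j,e^j)$ and $u(p,e^j)\geq u(e^i,e^j)$, and the bilinear expansion of $u(p,e^j)$ delivers the same ``a convex combination cannot strictly exceed all of its components'' contradiction. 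What your version buys: no invasion barriers or $\epsilon$-bookkeeping at all, and only two-mutation stability is ever invoked; it also makes transparent exactly which inequalities from the characterization are doing the work. What the paper's version buys: it is self-contained relative to the definition (no dependence on Section 3), and its computation is reused almost verbatim in the proof of Theorem \ref{unifinvthm} on uniform invasion barriers. One small remark on your flagged subtlety: the existence of a second index $i\neq j$ in $\supp(p)$ is automatic once $p$ is mixed (a distribution supported on a single point is pure), and the place where mixedness is truly indispensable is earlier, in guaranteeing $e^j\neq p$ so that Theorem \ref{charmulti}(b) applies to $q=e^j$ at all --- which you do use correctly.
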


\begin{proof}
Let $p$ be evolutionarily stable against
multiple  mutations. If possible, let $p$ be not a pure strategy. Let
$p = (p_1, p_2, \cdots, p_k)$.
Let $\bar \epsilon = \bar \epsilon (e^1,e^2, \cdots, e^k)$ be 
the invasion barrier corresponding
to all  the $k$ pure mutations.
Let $r = \alpha_1 e^1 + \alpha_2 e^2 + \cdots + \alpha_k e^k + 
(1 - \alpha_1 - \alpha_2 - \cdots - \alpha_k) p$, where $0
< \alpha_1, \alpha_2, \cdots, \alpha_k < \bar{\epsilon}$.
 Then, we have
\begin{equation}\label{pure1}
\sum_{i=1}^k p_i u(e^i, r) = u(p, r)
> \max\{u(e^1, r), u(e^2, r), \cdots, u(e^k, r) \},
\end{equation}
which is a contradiction. Thus $p$ must be pure.
\qed \end{proof}

So far we have not answered the question of existence of uniform invasion 
barrier. We now answer this question. 

\begin{theorem}\label{unifinvthm}
If $p$ is robust against multiple mutations, then it has uniform invasion barrier.
\end{theorem}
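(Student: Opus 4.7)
Since the previous theorem forces $p$ to be a pure strategy, say $p = e^j$, the plan is to exploit Theorem \ref{thm-2m}: once a uniform invasion barrier is produced for two mutations, it automatically propagates to any $m \geq 2$. So I would fix arbitrary mutants $r^1, r^2 \neq p$ and weights $\epsilon_1, \epsilon_2 > 0$, and study the gap
\[
h_i(\epsilon_1,\epsilon_2) = \epsilon_1[u(p,r^1) - u(r^i,r^1)] + \epsilon_2[u(p,r^2) - u(r^i,r^2)] + (1-\epsilon_1-\epsilon_2)[u(p,p) - u(r^i,p)],
\]
aiming to produce $\bar\epsilon > 0$, independent of $r^1, r^2$, that forces $h_i > 0$ for $i=1,2$ whenever $\epsilon_1,\epsilon_2 \in (0,\bar\epsilon]$.

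The bookkeeping I would set up is the following: let $I = \{l : u(e^l, e^j) = u(e^j, e^j)\}$ be the index set of pure best responses to $p$, put $\phi_l = u(e^j, e^j) - u(e^l, e^j) \geq 0$ (so $\phi_l = 0$ iff $l \in I$), and set $\phi_{\min} = \min_{l \notin I} \phi_l > 0$ when $I^c$ is non-empty (if $I$ covers every pure strategy, Case A below handles every mutant and the bound is vacuous). Let $M = \max_{l,m} |u(e^l, e^m)|$. For each mutation $r^i$ define $\beta_i = \sum_{l \notin I} r^i_l$ and write the decomposition $r^i = (1-\beta_i) q_i + \beta_i s_i$, where $q_i$ is supported on $I$ (hence $q_i \in BR(p)$) and $s_i$ is supported on $I^c$.

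The analysis then splits into two cases. In Case A ($r^i \in BR(p)\setminus\{p\}$, i.e., $\beta_i=0$), Theorem \ref{charmulti} delivers $u(p,r) \geq u(r^i,r)$ for every $r \in \Delta$ with strict inequality at $r = r^i$; the $\epsilon_i$-coefficient of $h_i$ is thus strictly positive, the other two terms are non-negative, and $h_i > 0$ with no size constraint on $\epsilon_1,\epsilon_2$. In Case B ($r^i \notin BR(p)$, i.e., $\beta_i>0$), the constant term of $h_i$ equals $\sum_l r^i_l \phi_l \geq \phi_{\min}\beta_i$, while the decomposition gives
\[
u(p,r^l) - u(r^i,r^l) = (1-\beta_i)[u(p,r^l) - u(q_i,r^l)] + \beta_i[u(p,r^l) - u(s_i,r^l)],
\]
whose first bracket is non-negative (apply Theorem \ref{charmulti} to $q_i \in BR(p)$) and whose second bracket lies in $[-2M, 2M]$, giving each cross term at least $-2M\beta_i$. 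Assembling these,
\[
h_i \;\geq\; \beta_i\bigl[(1-\epsilon_1-\epsilon_2)\phi_{\min} - 2M(\epsilon_1+\epsilon_2)\bigr],
\]
which is positive as soon as $\epsilon_1+\epsilon_2 < \phi_{\min}/(\phi_{\min} + 2M)$; choosing $\bar\epsilon$ strictly less than half of that threshold delivers the uniform two-mutation barrier, and Theorem \ref{thm-2m} then yields the barrier for every $m \geq 2$.

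The main obstacle is that the drift $u(p,p) - u(r^i,p)$---the only \emph{a priori} strictly positive summand in Case B---can be made arbitrarily small by placing $r^i$ just outside $BR(p)$, so a naive compactness argument on $\Delta \setminus \{p\}$ is doomed. The manoeuvre that saves the day is the decomposition $r^i = (1-\beta_i)q_i + \beta_i s_i$: it shows that both the positive drift and the potentially negative cross-terms scale with the same parameter $\beta_i$, so the smallness cancels and the uniform barrier depends only on the game data $\phi_{\min}$ and $M$.
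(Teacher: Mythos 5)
Your proof is correct, but it reaches the uniform barrier by a genuinely different route than the paper. The paper's proof exploits purity much more directly: writing $p=e^k$, it takes $\bar\epsilon$ to be the invasion barrier for the \emph{single specific tuple} of pure mutations $(e^1,\dots,e^{k-1})$ --- which is automatically uniform, since no quantification over mutant tuples is involved --- and then, for arbitrary mutants $r^1,\dots,r^m$ with proportions at most $\bar\epsilon/m$, expresses the post-entry state $w$ in the pure basis as $\beta_1 e^1+\cdots+\beta_{k-1}e^{k-1}+(1-\beta_1-\cdots-\beta_{k-1})p$ with each $\beta_j\le\bar\epsilon$, obtains $u(p,w)>u(e^j,w)$ for $j\le k-1$ from robustness against the pure mutations, and recovers $u(p,w)>u(r^i,w)$ by averaging these inequalities over the coefficients of $r^i$. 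By contrast, you prove the uniform two-mutation barrier quantitatively from condition (b) of Theorem \ref{charmulti}, via the decomposition $r^i=(1-\beta_i)q_i+\beta_i s_i$ into best-response and non-best-response parts, and then invoke Theorem \ref{thm-2m} to propagate to all $m\ge 2$ --- arguably the architecture the paper itself gestures at in the remark following Theorem \ref{thm-2m}, though its actual proof of Theorem \ref{unifinvthm} never uses that theorem. Your estimate $h_i\ge\beta_i\bigl[(1-\epsilon_1-\epsilon_2)\phi_{\min}-2M(\epsilon_1+\epsilon_2)\bigr]$ is sound (the cancellation of $\beta_i$ is exactly the right observation, and the edge cases $q_i=p$ and $\beta_i=1$ are harmless degeneracies), and it buys two things the paper's argument does not: an \emph{explicit} barrier $\bar\epsilon<\tfrac{1}{2}\phi_{\min}/(\phi_{\min}+2M)$ computable from the payoff data, and independence from the purity of $p$, since your argument only uses Theorem \ref{charmulti}(b) --- purity enters your write-up nominally but is never needed. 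What the paper's route buys in exchange is brevity and a cleaner derivation of the $m$-independent bound $\bar\epsilon$ on the total mutant fraction $\epsilon_1+\cdots+\epsilon_m$, which falls out immediately from $\beta_j\le\sum_i\epsilon_i\le\bar\epsilon$; your version recovers the analogous statement through the $\bar\epsilon/m$ scaling of Theorem \ref{thm-2m}.
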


\begin{proof}

Let $p$ be robust against multiple mutations. Then $p$ is necessarily pure. 
Without loss of generality, let us assume that $p = e^k$.  

Let $\bar{\epsilon}$ be the invasion 
barrier corresponding to  the pure strategies $e^1, \cdots, e^{k-1}$. We show that $\frac{\bar{\epsilon}}{m}$ is
invasion barrier for any $m$ mutations with $m \geq k - 1$.

Let $r^1, r^2, \cdots, r^m$ be $m$ mutations with proportions $\epsilon_1, \epsilon_2, 
\cdots, \epsilon_m$  respectively. Choose $\alpha_i^j$, 
$i=1,2, \cdots, m$, $j=1,2, \cdots, k$ such that  $r^i =  \alpha_i^1 e^1 + 
\alpha_i^2 e^2 + \cdots +  \alpha_i^k e^k$. Consder 
\begin{multline*}
w = \epsilon_1 r^1 + \epsilon_2 r^2 + \cdots + \epsilon_m r^m  - (1 - \epsilon_1 
- \epsilon_2 - \cdots - \epsilon_m ) p \\
=  \beta_1 e^1 + \beta_2 e^2  + \cdots + \beta_k e^k + (1 - \beta_1 - \beta_2 - \cdots 
- \beta_k ) p  \\
= \beta_1 e^1 + \beta_2 e^2  + \cdots + \beta_{k-1} e^{k-1} + (1 - \beta_1 - \beta_2 - \cdots 
- \beta_{k-1} ) p
\end{multline*}
where
\[
\beta_i = \epsilon_1 \alpha_1^i + \epsilon_2 \alpha_2^i + \cdots + \epsilon_m \alpha_m^i 
\text{ and } i = 1, 2, \cdots, m.
\]
If we choose $\epsilon_1, \epsilon_2,  \cdots, \epsilon_m \leq \frac{\bar{\epsilon}}{m}$,
then from the definition of evolutionary stability we have,
\[
u(p, w) > u(e^j, w), j=1,2, \cdots, k-1.
\]
Thus for any  $i$, $i=1,2, \cdots, m$, we have
\[
u(p, w) = \sum_{j=1}^{k} \alpha_i^j u(p, w) > 
 \sum_{j=1}^{k} \alpha_i^j u(e^j, w) = u(r^i, w).
\]
Here we have used the above $k-1$ inequalities together with the fact that 
$p = e^k$. Thus $p$ is evolutionarily stable against $m$ mutations with 
$\frac{\bar{\epsilon}}{m}$ as the uniform invasion barrier.

Note that any invasion barrier corresponding to $m$ mutations is also invasion
barrier corresponing to $n$ mutations, where $n < m$. This completes the proof
the thoerem.
\qed
\end{proof}

\begin{remark}
As a consequence of the proof, we note that the bound on the total fraction 
of the $m$ mutations $\epsilon_1 + \epsilon_2 + \cdots + \epsilon_m$
can be chosen to be $\bar{\epsilon}$, which is independent of $m$.
\end{remark}

A careful observation of the proof of the Theorem \ref{unifinvthm} gives a 
complete characterization of  evolutionary 
stability against multiple mutations in $2 \times 2$ games.  We omit the proof
as it is essentially contained in the proof of the Theorem \ref{unifinvthm}.

\begin{theorem}
For two player games, a  pure strategy $p$ is  evolutionarily stabile against multiple 
mutations if and  only if it is ESS.
\end{theorem}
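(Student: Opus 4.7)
The ``only if'' direction is immediate: robustness against $m \geq 2$ mutations specialized to $m=1$ is exactly the ESS condition, so I only need to establish the ``if'' direction. Given the preceding sentence, I read ``two player games'' as $2\times 2$ games, i.e.\ $k=2$, and I assume $p$ is a pure ESS. Without loss of generality, take $p = e^2$; then the only other pure strategy is $e^1$.

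The plan is to reuse, essentially verbatim, the construction in the proof of Theorem \ref{unifinvthm}, but exploiting the fact that there is only one pure mutation to worry about. Since $p = e^2$ is an ESS, there exists an invasion barrier $\bar{\epsilon} = \bar{\epsilon}(e^1) \in (0,1)$ such that
\[
u(p,\,\epsilon e^1 + (1-\epsilon)p) > u(e^1,\,\epsilon e^1 + (1-\epsilon)p)\qquad\text{for all } 0 < \epsilon \leq \bar{\epsilon}.
\]
Let $r^1,\dots,r^m$ be any $m$ mutations, each different from $p$. Write $r^i = \alpha_i e^1 + (1-\alpha_i) e^2$ with $\alpha_i \in (0,1]$ (strictly positive because $r^i \neq e^2$). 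For proportions $\epsilon_1,\dots,\epsilon_m \in (0,\bar{\epsilon}/m]$, set $w = \sum_i \epsilon_i r^i + (1-\sum_i \epsilon_i)p$, and let $\beta = \sum_i \epsilon_i \alpha_i$. Since $p = e^2$, collecting terms gives $w = \beta e^1 + (1-\beta)p$, and $\beta \leq \sum_i \epsilon_i \leq \bar{\epsilon}$, so the ESS inequality applies to the mixture $w$, yielding $u(p,w) > u(e^1,w)$.

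It remains to propagate this to each mutation $r^i$. By bilinearity and $p = e^2$,
\[
u(p,w) - u(r^i,w) = \alpha_i\bigl[u(e^2,w) - u(e^1,w)\bigr] = \alpha_i\bigl[u(p,w) - u(e^1,w)\bigr] > 0,
\]
since $\alpha_i > 0$. Hence $u(p,w) > u(r^i,w)$ for every $i$, so $p$ is robust against $m$ mutations with uniform invasion barrier $\bar{\epsilon}/m$.

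The main (and essentially only) subtlety is noting that in a $2\times 2$ game the ``list'' of pure mutations relevant in the proof of Theorem \ref{unifinvthm} collapses to a single pure strategy $e^1$, so a one-dimensional ESS invasion barrier is already enough to dominate any mixed mutation. The argument does not extend to $k > 2$ precisely because there ESS provides invasion barriers against each pure mutation individually but not simultaneously — which is the whole point of the broader theory developed in the paper.
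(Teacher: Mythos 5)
Your proof is correct and follows essentially the same route the paper intends: the paper omits the proof, stating it is contained in the proof of Theorem \ref{unifinvthm}, and your argument is exactly that proof specialized to $k=2$, where the list of pure mutations collapses to $e^1$ so the ESS barrier $\bar{\epsilon}(e^1)$ suffices, with the same decomposition $w=\beta e^1+(1-\beta)p$ and the same linearity step propagating $u(p,w)>u(e^1,w)$ to each $r^i$. One cosmetic remark: for the ``only if'' direction, the $m$-mutation definition does not literally specialize to $m=1$; instead take $r^1=\cdots=r^m=r$ with $\epsilon_i=\epsilon/m$ (the definition does not require distinct mutants), which yields the ESS inequality.
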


\begin{remark}
We believe that this result is not true  for games with three or more strategies.
However, we neither have a proof nor a counter example.
\end{remark}

\section{Conclusions}
In this article, we introduced and studied the evolutionary stability against 
multiple mutations. We showed that the number of mutations  ( $m \geq 2$) 
is invariant. Further the evolutionarily stable strategy against multiple
mutations is necessarily a pure strategy. This notion coincides with ESS in the 
case of $2 \times 2$ symmetric games, as long as the ESS is pure. 
Like in the case of ESS, we do not have any general result on the existence.
Again it is in general non-unique.  In deed, strict Nash equilibrium, itself, can 
be non-unique.  

Our study also leaves several question to explore further. Firstly, note
that classical Hawk-Dove game does not have any evolutionarily stable 
strategy against multiple mutations. We do not know if this has any implication
in evolutionary biology as of now. 

In $2 \times 2$ case, an ESS is evolutionarily stable against multiple mutations
if and only if the ESS is pure. We believe that this result is not true in general 
case. However, we do not have any counter example.

Whether evolutionary stability against multiple mutations can be seen 
as a concept related to multiplayer games seems to be an interesting
issue to be explored. If such a connection can be drawn, we can, hopefully, 
apply our results in situations modeled as multiplayer games e.g.,  in bird nesting.

\begin{acknowledgements}
The initial impetus for the work reported here is provided while A.J. Shaiju was a
PostDoc at Laboratoire I3S, UNSA-CNRS, Sophia Antipolis, France. He profusely
thanks Prof.\ P.\ Bernhard for valuable and stimulating discussions. 
The authors record their sincere thanks to Prof. V.S. Borkar for various comments 
that helped improve the presentation of the article.
\end{acknowledgements}

\bibliographystyle{elsarticle-harv}
\bibliography{egt}

\end{document}